\newcommand{\cc}{\mathbb{C}}
\newtheoremstyle{thm}{15 pt}{10 pt}{\itshape}{}{\bfseries}{.}{.5em}{}
\newtheorem{thmx}{Theorem}
\newtheorem{cor-main}[thmx]{Corollary}
\newtheorem{theorem}{Theorem}
\numberwithin{theorem}{section}
\newtheorem{observation}[theorem]{Observation}
\newtheorem*{thm*}{Theorem}
\newtheorem*{cor*}{Corollary} 
\newtheorem*{lem*}{Lemma}
\newtheorem*{lemmaRealization*}{Realization Lemma}
\newtheorem*{problemA*}{Problem 1.100A}
\newtheorem*{problemB*}{Problem 1.100B}
\newtheorem*{problem*}{Problem 1.100A/B}
\newtheorem*{construction*}{Motivating construction}
\newtheoremstyle{ex}{10 pt}{10 pt}{\normalfont}{}{\bfseries}{.}{.5em}{}
\theoremstyle{ex}
\newtheorem{example}[theorem]{Example}
\newtheoremstyle{rem}{10 pt}{5 pt}{\normalfont}{}{\bfseries}{.}{.5em}{}
\theoremstyle{rem}
\newtheorem{remark}[theorem]{Remark}
\newtheorem*{rem*}{Remark}
\numberwithin{equation}{section}
\begin{document}

\thispagestyle{empty}
\title[Unknotted ribbon disks and algebraic curves]{Cross-sections of unknotted ribbon disks and algebraic curves}
\author[K. Hayden]{Kyle Hayden} \address{Boston College, Chestnut Hill, MA 02467} \email{kyle.hayden@bc.edu}

\maketitle

\begin{abstract}
We resolve parts (A) and (B) of Problem 1.100 from Kirby's list by showing that many nontrivial links arise as cross-sections of unknotted holomorphic disks in the four-ball. The techniques can be used to produce unknotted ribbon surfaces with prescribed cross-sections, including unknotted Lagrangian disks with nontrivial cross-sections.
\end{abstract}

\section{Introduction} \label{sec:intro}

A properly embedded surface $\Sigma$ in $B^4$ is said to be \textbf{ribbon} if the restriction of the radial distance function from $B^4$ to $\Sigma$ is a Morse function with no local maxima. As an important class of examples, holomorphic curves in $B^4 \subset \cc^2$ are naturally ribbon.  This Morse-theoretic perspective has shed light on the topology of holomorphic curves $\Sigma \subset \cc^2$ and the links that arise as transverse intersections $L_r=\Sigma \cap S^3_r$, where $S^3_r$ is a sphere of radius $r>0$; see \cite{fiedler,bo:qp,borodzik:morse,hayden:stein}. Rudolph \cite{rudolph:qp-alg} and Boileau-Orevkov \cite{bo:qp} showed that the links obtained as cross-sections of holomorphic curves are precisely the \emph{quasipositive links}, a special class of braid closures. To fully exploit this characterization, it is necessary to understand the relationship between different cross-sections of the same holomorphic curve. This is the subject of Problem 1.100 in Kirby's list.

\begin{problem*}[\cite{kirby}]
 \label{prob:kirby} 
Let $\Sigma \subset \cc^2$ be a smooth algebraic curve, with $L_r=\Sigma \cap S^3_r$ defined as above, and let $g$ denote the Seifert genus.
\begin{enumerate}[label=\bf(\Alph*)]
\item If $r \leq R$, is $g(L_r) \leq g(L_R)$?
\item If $L_R$ is an unlink and $r \leq R$, is $L_r$ an unlink?
\end{enumerate}
\end{problem*}

For comparison, by Kronheimer and Mrowka's proof of the local Thom conjecture \cite{km:thom}, the slice genus $g_*$ is known to satisfy $g_*(L_r) \leq g_*(L_R)$ for $r \leq R$. In addition, since Seifert and slice genera are equal for strongly quasipositive links by \cite{rudolph:qp-obstruction}, the answer to part (A) of Problem~\hyperref[prob:kirby]{1.100} is ``yes'' when $L_r$ is strongly quasipositive.  Part (B) is a special case of part (A), and Gordon's work \cite{gordon:ribbon} on ribbon concordance implies that the answer to (B) is ``yes'' when $L_r$ is a knot, i.e.~connected. In spite of this,  we show the answers to (A) and (B) are both ``no''.

\begin{thmx}\label{thm:embed}
Every quasipositive slice knot arises as a link component in a cross-section of an unknotted holomorphic disk in $B^4 \subset \cc^2$.
\end{thmx}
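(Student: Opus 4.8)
Before constructing anything, it is worth recording that both hypotheses are forced. Cross-sections of holomorphic curves are quasipositive by Rudolph and Boileau--Orevkov \cite{rudolph:qp-alg,bo:qp}, so a link component of such a cross-section is quasipositive. Moreover, if $K$ is a component of $L_r=D\cap S^3_r$ for a properly embedded holomorphic disk $D$, then the preimage of $K$ under a holomorphic parametrization $D^2\to D$ is an embedded circle $\gamma$ bounding a subdisk $\delta'\subset D^2$; the radius squared pulls back to a strictly subharmonic function that equals $r$ along $\gamma$, hence is $<r$ on the interior of $\delta'$, so $\delta'$ maps to a properly embedded holomorphic disk in $\overline{B^4_r}$ with boundary $K$. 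In particular $K$ is smoothly slice, so ``quasipositive and slice'' is exactly the necessary condition and the theorem asserts its sufficiency. The plan is to build the required disk $D$ in two pieces glued along a round sphere $S^3_r$: an inner holomorphic surface in $\overline{B^4_r}$ with $\partial=L_r$, consisting of a holomorphic disk $\delta$ bounded by $K$ together with a few auxiliary disks bounded by unknotted split components, and an outer holomorphic cobordism filling the shell $S^3\times[r,1]$ that runs from $L_r$ up to the unknot $\partial D$.

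For the inner piece, I would begin with a quasipositive braid word $\beta$ with $\widehat\beta=K$. Rudolph's braided-surface construction then presents a piece of algebraic curve $C\subset B^4$ with $\partial C=K$, realized as a branched cover of $D^2$, of some genus $g=g(C)\ge g_4(K)=0$. Sliceness is what should allow this genus to be removed holomorphically: the slice disk provides $g$ complementary handles, and --- this is the point where I would invoke the Realization Lemma --- the resulting genus reduction can be performed within the category of algebraic curves rather than only smooth surfaces, producing a properly embedded holomorphic disk $\delta\subset\overline{B^4_r}$ with $\partial\delta=K$ (the same lemma also produces the holomorphic disks bounded by the auxiliary components that appear below).

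For the outer piece, a plain product annulus is unavailable when $K$ is knotted, since a holomorphic cobordism is automatically ribbon; instead the cross-section $L_r$ must carry the ``knotting defect'' of $K$ in a collection of auxiliary unknotted split components. Using the slice disk once more, I would write the unknot $\partial D$, read from the outside inward, as the result of a sequence of band attachments and removals of disjoint unknotted caps, terminating in exactly this $L_r$; the caps removed are the boundaries of the auxiliary disks included in the inner piece. Feeding this movie back through the Realization Lemma produces the holomorphic cobordism that fills the shell.

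Gluing the two pieces yields a properly embedded holomorphic disk $D$ with $K$ a component of the cross-section $L_r$; the remaining, and I expect most delicate, point is to certify that $D$ is the \emph{unknotted} disk, i.e.\ smoothly isotopic rel boundary to the trivial slice disk of the unknot. I would verify this by tracking a handle decomposition of $B^4\setminus\nu(D)$ through the construction and checking that the branch points, bands, and cap removals cancel down to $S^1\times B^3$ (boundary-connect-summed with extra copies for the auxiliary disks), the complement of the standard disk; equivalently, one hopes to recognize the whole construction as an explicit ambient isotopy of the trivial disk. Since holomorphic disks in $B^4$ are abundant but rigid, I anticipate that it is this reconciliation of ``holomorphic'' with ``smoothly standard,'' rather than the mere existence of $D$, that forms the heart of the argument.
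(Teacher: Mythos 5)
Your overall shape---represent $K$ as a quasipositive braid closure, place it at an inner radius together with auxiliary unknotted split components, and use the Realization Lemma to pass outward to an unknotted boundary---matches the paper's strategy, but the proposal omits the steps that actually constitute the proof. First, the outward movie is asserted rather than constructed: the slice disk only gives an \emph{inward} ribbon movie from $K$ to an unlink, and it does not produce what the Realization Lemma requires, namely a quasipositive braid word $\beta'$ of some fixed index containing $\widehat{\beta}=K$ as a sublink together with additions of conjugates of positive generators taking $\beta'$ to an unknotted quasipositive braid $\gamma$. Producing this pair is the heart of the paper's argument: for each ribbon singularity of the Bennequin surface of a quasipositive band presentation of $\beta\in B_n$ one inserts an extra strand (an unknot component) to form $\beta'\in B_{n+m}$, then adds one positive generator per singularity to get $\gamma$; an explicit isotopy shows $\gamma$ has an embedded Bennequin surface, and sliceness enters only through the band count $n+2g_*-1=n-1$, which forces that surface to be a disk, i.e.\ $\gamma$ is an unknot. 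Second, your plan to build an inner holomorphic disk for $K$ (plus auxiliary disks) and an outer holomorphic cobordism separately and then glue them along $S^3_r$ is not available in the holomorphic category---one cannot match two independently constructed curves along a sphere---and the paper avoids any gluing by applying the Realization Lemma once to the pair $(\beta',\gamma)$, obtaining a single algebraic curve with both prescribed cross-sections. (Relatedly, the ``genus reduction within algebraic curves'' you invoke for the inner piece is not a real operation; what is true is that sharpness of the slice-Bennequin inequality forces any quasipositive band presentation of a slice knot to have exactly $n-1$ bands, so Rudolph's construction already yields a disk.)

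Finally, unknottedness of the resulting holomorphic disk---which you rightly flag as the delicate point---is left as a hope: ``tracking a handle decomposition and checking that everything cancels'' is not an argument, and nothing in the construction makes such cancellation visible. The paper proves unknottedness by Eliashberg's filling-by-holomorphic-disks technique: the outer boundary $\gamma$ is a transverse unknot with self-linking number $-1$, so it bounds a disk $D\subset S^3$ whose characteristic foliation is radial with one positive elliptic point, and the Bishop family of holomorphic disks emanating from that point extends to a family foliating the three-ball bounded by $D\cup\Sigma_1$; this family contains $\Sigma_1$ and exhibits an unknotting of $\Sigma_1$ through holomorphic disks. Without an argument of this kind (or some other certificate of triviality of the disk exterior), your construction would only show that $K$ is a component of a cross-section of \emph{some} holomorphic disk, which is strictly weaker than the theorem.
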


The holomorphic disks constructed in the proof of Theorem~\ref{thm:embed} are compact pieces of algebraic curves, hence provide the desired counterexamples to part (B) of Problem~\hyperref[prob:kirby]{1.100}. The construction is inspired by a version of the Whitney trick that admits an appealing statement in terms of \textbf{ribbon-immersed} surfaces in $S^3$, i.e.~immersed surfaces with boundary whose singularities are of the type depicted in Figure~\hyperref[fig:ribbon-int]{\ref{fig:ribbon-int}(a)}: 
\emph{Every ribbon-immersed surface $F \subset S^3$ lies inside a larger one $F'$ that is isotopic (through immersions that are injective along the boundary) to an embedded surface $F''$.} Moreover, the surface $F'$ may be taken to have the same topological type as $F$; the construction is depicted in Figure~\ref{fig:ribbon-int}. The corresponding four-dimensional construction can be used to embed an arbitrary ribbon surface inside one that is unknotted, i.e.~isotopic to an embedded  surface in $\partial B^4$.

\begin{figure}\center
\def\svgwidth{.9\linewidth} 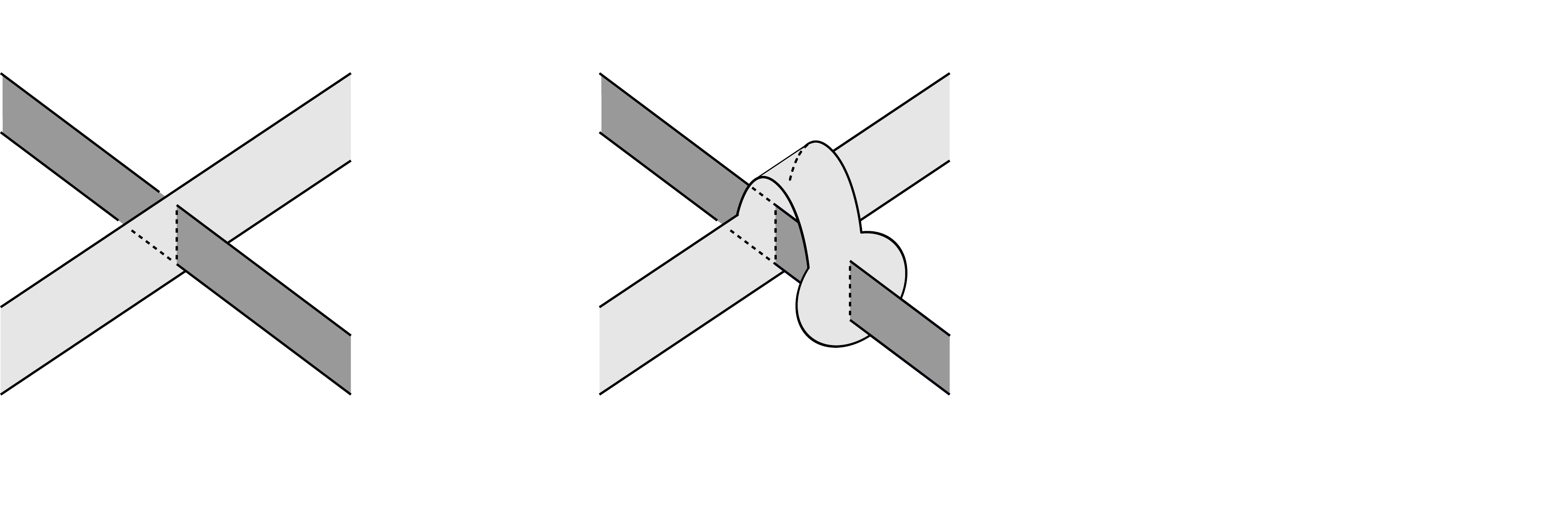 
\caption{A version of the Whitney trick that embeds one ribbon-immersed surface inside a simpler one.} 
\label{fig:ribbon-int}
\end{figure}

Any counterexamples to part (A) of Problem~\hyperref[prob:kirby]{1.100} constructed using  Theorem~\ref{thm:embed} will have disconnected cross-sections $L_r$. However, we can use an alternative construction to obtain a sharper result:

\begin{thmx}\label{thm:100a}
For every integer $n\geq 0$, there is a smooth algebraic curve $\Sigma_n$ and a pair of positive numbers $r <R$ such that $g(L_r)=2n+1$ and $g(L_R)=n+1$, where $L_r=\Sigma_n \cap S^3_r$ and $L_R=\Sigma_n \cap S^3_R$ are knots.
\end{thmx}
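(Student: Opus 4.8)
The plan is to build each $\Sigma_n$ out of two ingredients. The first is a strongly quasipositive knot $B$ of Seifert genus $n+1$, for which I would take the torus knot $B=T(2,2n+3)$: it bounds its Bennequin surface, which is a compact piece of an algebraic curve, and all of its sufficiently small cross-sections are copies of $B$. The second is a quasipositive slice knot $J_n$ of Seifert genus exactly $n$ (with $J_0$ the unknot). Setting $K_r:=B\#J_n$ and $K_R:=B$, I will arrange that the inner cross-section of $\Sigma_n$ is $L_r=K_r$ and the outer one is $L_R=K_R$. Granting the construction of $\Sigma_n$ described below, the genus claims follow at once from additivity of the Seifert genus under connected sum: $g(K_r)=g(B)+g(J_n)=(n+1)+n=2n+1$ and $g(K_R)=g(B)=n+1$, and both cross-sections are knots. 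For $n\ge 1$ this produces a strict decrease $g(L_r)>g(L_R)$ as the radius grows; for $n=0$ it is the degenerate case $K_r=K_R=T(2,3)$.

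To realize $K_r$ and $K_R$ as cross-sections of a single smooth algebraic curve, I would build $\Sigma_n$ from the inside out. Over the ball $\{\rho\le r\}$ the surface is a quasipositive braided surface with boundary $K_r=B\#J_n$: take the boundary-connected-sum of the Bennequin surface of $B$ with a quasipositive braided surface for $J_n$, using that the class of quasipositive surfaces is closed under boundary-connected-sum. Over the shell $\{r\le\rho\le R\}$ I would attach a ribbon cobordism that ``peels off'' the $J_n$-summand. Since $J_n$ is ribbon, it is obtained from an $(f{+}1)$-component unlink by $f$ fusion bands for some $f\ge 0$; performing the corresponding $f$ fission bands inside the $J_n$-summand carries $B\#J_n$ through a chain of $f$ splitting saddles to $B\sqcup(f\text{ unknots})$, after which $f$ trivial merging saddles absorb the split unknots and return $K_R=B$. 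This cobordism has no local maxima of the radial distance function, so the union $\Sigma_n$ of it with the inner surface is a ribbon surface in $B^4$ with $\partial\Sigma_n=K_R$ and with $K_r$ as its cross-section at radius $r$. It then remains to check that this ribbon surface is isotopic to a compact piece of a \emph{smooth algebraic curve}; I would deduce this from the realization machinery of Rudolph~\cite{rudolph:qp-alg} and Boileau--Orevkov~\cite{bo:qp}, as used in the proof of Theorem~\ref{thm:embed}, by exhibiting a movie of cross-sections $L_\rho$ for $0\le\rho\le R$ all of whose frames are quasipositive links and whose elementary cobordisms are quasipositive band attachments.

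The main obstacle is precisely this last realizability step. In the strongly quasipositive (for instance, Lagrangian) setting every cross-section satisfies $g=g_*$, so such surfaces cannot exhibit the desired drop; here one must genuinely use merely-quasipositive braided surfaces, and the real work is to choose the peeling cobordism --- which lowers the Seifert genus of the boundary knot while only attaching handles to the surface --- so that it is presented by quasipositive bands, compatibly with a suitable quasipositive braided surface for $J_n$ (the naive fission/fusion movie need not do this). A secondary point to settle is the input that a quasipositive slice knot of Seifert genus exactly $n$ exists for every $n\ge 1$; once this is available, the additivity of the Seifert genus pins down $g(J_n)=n$ and hence $g(K_r)=2n+1$, while $g(K_R)=g(T(2,2n+3))=n+1$ is classical.
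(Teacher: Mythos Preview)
Your approach differs substantially from the paper's, and the gap you flag yourself is genuine and central rather than a technicality.

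The paper does not use a connected-sum decomposition at all. It writes down explicit quasipositive $4$-braids $\beta_n$ and strongly quasipositive $4$-braids $\gamma_n$, checks by hand that $\gamma_n$ is obtained from $\beta_n$ by inserting $2n+2$ positive bands into the braid word, and then invokes Feller's Realization Lemma directly. The genus of $\gamma_n$ is read off from strong quasipositivity; the equality $g(\beta_n)=2n+1$ is proved by showing the Bennequin surface of $\beta_n$ is genus-minimizing, verified for $n=0,1$ via the Alexander polynomial and then propagated to all $n$ by recognizing the passage $\beta_{n-1}\to\beta_n$ as four Hopf plumbings and appealing to Gabai. No peeling cobordism appears, and the genus computation is done intrinsically rather than via additivity.

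In your outline the entire content of the theorem is hidden in the sentence ``It then remains to check that this ribbon surface is isotopic to a compact piece of a smooth algebraic curve.'' The Realization Lemma does not accept an arbitrary ribbon cobordism: its input is a sequence of \emph{quasipositive braid words} related by braid relations, conjugations, and insertions of conjugates of positive generators. Your fission/fusion movie is described only at the level of smooth links, and you offer no mechanism by which the $f$ splitting bands carrying $B\#J_n$ to $B\sqcup(f\text{ unknots})$, or the $f$ merging bands that absorb those unknots, can be presented as insertions of positive bands into some fixed quasipositive word for $B\#J_n$. Manufacturing such a sequence is exactly the hard part, and it is what forces the paper to abandon general constructions in favor of explicit braids whose bandwords make the required insertions visible. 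Without it you have a ribbon surface with the desired cross-sections but not an algebraic one, and hence no counterexample to Problem~1.100(A). Your secondary point---the existence of quasipositive slice knots with $g(J_n)=n$---is comparatively easy (connected sums of $m(8_{20})$ suffice), but resolving it does nothing for the main obstacle.
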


For each $\Sigma_n$, the smaller cross-section $L_r$ is a prime quasipositive 4-braid slice knot and $L_R$ is strongly quasipositive.

We also apply these techniques to study Lagrangian cobordisms between Legendrian links, as introduced in \cite{chantraine}. A \textbf{Lagrangian filling} of a Legendrian link $L$ in the standard contact $S^3$ is a properly embedded Lagrangian surface $\Sigma$ in the standard symplectic $B^4$ whose boundary is $L$. We further require the Lagrangian $\Sigma \subset B^4$ to be orientable, exact, and collared; for the significance of these latter two conditions, see \cite{chantraine:collar}.  As discussed in \cite{cornwell-ng-sivek:obstructions}, it is currently unknown if there are nontrivial knots that arise as collarable cross-sections of a Lagrangian filling of the unknot. We show that the answer is ``yes'' if the cross-section is allowed to be disconnected.

\begin{thmx}\label{thm:lagr}
There are infinitely many nontrivial links that arise as collared cross-sections of unknotted Lagrangian disks in the standard symplectic $B^4$.
\end{thmx}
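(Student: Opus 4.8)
The plan is to realize the four-dimensional Whitney-trick construction of Figure~\ref{fig:ribbon-int} by genuinely Lagrangian moves. The building blocks are the elementary Lagrangian cobordisms: traces of Legendrian isotopies, \emph{minimum} cobordisms that introduce a disjoint standard Legendrian unknot, and \emph{pinch} (saddle) cobordisms performed at a contractible crossing. A finite concatenation of these — a \emph{decomposable} Lagrangian cobordism — is automatically orientable, exact, and collared, and the Legendrian links appearing between consecutive pieces are exactly its radial cross-sections. So the required orientable, exact, collared hypotheses come for free, and constructing an unknotted Lagrangian disk with a prescribed cross-section reduces to writing down such a sequence starting from $\emptyset$ and ending at the standard Legendrian unknot $U$ (the unique $\tb = -1$ Legendrian unknot, which is forced because a disk filling satisfies $\tb = -\chi = -1$), subject to $\#\{\text{births}\} - \#\{\text{pinches}\} = 1$.

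A quick count with component numbers and Euler characteristics shows that in any such disk-sequence there are no ``maxima'' and no \emph{splitting} pinches: every pinch is a \emph{merge}, and $\Sigma \cap B^4_r$ is at every radius a disjoint union of disks. Hence the cross-sections are band sums of Legendrian unlinks and, in particular, are strongly slice links — each component bounds a disk in $B^4_r$, and these disks are pairwise disjoint — a class that is far from consisting only of unlinks (it contains Whitehead-type links, Bing doubles of slice knots, and so on). To build one example I would mimic Figure~\ref{fig:ribbon-int}: introduce several Legendrian unknot components via minimum cobordisms, apply a (possibly complicated) Legendrian isotopy of the resulting unlink so as to install the clasping pattern of a chosen nontrivial strongly slice link $L_1$, and then perform a sequence of merge pinches; immediately after the clasping isotopy, before enough merges have been done to undo it, the cross-section is a Legendrian representative of $L_1$, while the remaining merges and a final Legendrian isotopy bring the picture down to $U$. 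The four-dimensional trace of this sequence is the desired Lagrangian disk $\Sigma_1$: it is a disk by the Euler-characteristic bookkeeping, it is orientable, exact, and collared because it is decomposable, and it is unknotted because a Lagrangian disk in $B^4$ bounding the standard Legendrian unknot is Lagrangian isotopic to the flat disk (this is also visible directly from the explicit sequence, as in the smooth case). Its distinguished collared cross-section is $L_1$, possibly split-union an unlink from auxiliary births, which does not affect nontriviality.

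To produce infinitely many such links, the cheapest route is to take, for each $n \geq 1$, an $n$-component Lagrangian unlink of disks formed by $n$ disjoint parallel copies of $\Sigma_1$ inside $B^4$ and join them with $n-1$ further merge pinches at a larger radius, obtaining an unknotted Lagrangian disk $\Sigma_n$; at the radius where the copies still display their distinguished cross-sections, $\Sigma_n \cap S^3_r$ is the $n$-fold split union $n L_1$, and these links are pairwise non-isotopic. (Alternatively, one can set up a genuine one-parameter family, e.g.\ by using $k$-fold clasps to realize the twisted Whitehead links.)

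The hard part will be the middle of the construction of $\Sigma_1$: every move of the Whitney-trick picture must be realized by an \emph{honest} elementary Lagrangian cobordism — minimum cobordisms may only insert \emph{standard} Legendrian unknots, pinches require a genuine contractible crossing, and neither maxima nor splitting moves are permitted — so the clasping that produces the nontrivial link must be installed by a bona fide Legendrian isotopy of an unlink, positioned so that a single subsequent pinch reveals it, and one must simultaneously control (i) the smooth link type of the distinguished cross-section, (ii) the Euler-characteristic count forcing a disk, and (iii) termination at the \emph{standard} Legendrian unknot. Checking that a Legendrian-legal sequence of pinches can unclasp the relevant link all the way down to $U$ is the Lagrangian refinement of the combinatorics encoded in Figure~\ref{fig:ribbon-int}, and it is the technical heart of the argument.
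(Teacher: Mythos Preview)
Your plan is essentially the paper's: build a decomposable Lagrangian filling of the standard Legendrian unknot using births, Legendrian isotopies, and merge-pinches, arranged so that an intermediate level displays a nontrivial link, and then invoke Eliashberg--Polterovich to conclude that the resulting disk is unknotted. Your component/Euler-characteristic bookkeeping, your identification of the cross-sections as strongly slice links, and your remark that unknottedness follows from the Lagrangian uniqueness result for disks bounding the $\tb=-1$ unknot are all correct and match the paper exactly.

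The gap is that you do not actually carry out the step you yourself flag as ``the technical heart'': producing a concrete Legendrian-legal sequence in which a nontrivial link appears and is then pinched down to the standard unknot. Saying you would ``install the clasping pattern \dots\ by a (possibly complicated) Legendrian isotopy'' is not yet a proof; one must exhibit front diagrams in which the required contractible crossings genuinely exist and the final isotopy to $U$ goes through. The paper supplies exactly this missing content: it gives an explicit local front-diagram modification (adding a full twist to a band, inserting a $0$-handle, performing a specific $1$-handle, and isotoping) that removes one ribbon singularity from the ribbon-immersed surface while staying within the allowed elementary moves. Iterating this local move on a Lagrangian-fillable knot $K$ produces a modified knot $K'$ sitting as a component of a collared cross-section of a decomposable Lagrangian disk bounded by $U$; an explicit worked example (with $K=m(9_{46})$ and $K'=11n_{139}$) is drawn out in full. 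Your split-union trick for ``infinitely many'' is fine, though the paper instead varies $K$ over an infinite family. In short: right strategy, right tools, but the proof is not complete until you draw the fronts.
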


The remainder of the paper is organized as follows. In \S\ref{sec:top}, we explain the topological construction underlying our primary results. In \S\ref{sec:100a}, we review the necessary background material on quasipositive braids and then prove Theorems~\ref{thm:embed} and \ref{thm:100a}. We close by constructing the Lagrangian disks needed to prove Theorem~\ref{thm:lagr} in \S\ref{sec:lagr}.

\begin{rem*}
In a previous version of this work, it was claimed that  Problem~\hyperref[prob:100b]{1.100B} has a positive answer. The argument relied on a statement about unknotted ribbon disks that was shown to be false by Jeffrey Meier and Alexander Zupan in a private correspondence \cite{meier-zupan:email}. Their  illumination of the error in the original argument  ultimately led to Theorem~\ref{thm:embed} above. I wish to thank  Meier and Zupan for their invaluable correspondence.
\end{rem*}

\subsection*{Acknowledgements} I wish to thank Lee Rudolph for bringing these questions to my attention and Eli Grigsby for numerous stimulating conversations about braids and ribbon surfaces. Thanks also to Peter Feller and Patrick Orson for comments on a draft, and to Siddhi Krishna, Clayton McDonald, Jeffrey Meier, and Alexander Zupan for helpful conversations.

\section{The topological construction}\label{sec:top}

This brief section offers an informal account of the  topological construction that inspires Theorem~\ref{thm:embed}. Given a link $L \subset S^3$, a \textbf{band} $b:I \times I \to S^3$ is an embedding of the unit square such that $L \cap b(I \times I)= b(I \times \partial I)$. A link $L'$ is obtained from $L$ by \textbf{band surgery} along $b$ if $L'=(L\setminus b(I \times \partial I) ) \cup b(\partial I \times I)$. For example, Figure~\hyperref[fig:stevedore]{\ref{fig:stevedore}(a-b)} presents the stevedore knot $6_1$ as the result of band surgery on an unlink. As illustrated in Figure~\hyperref[fig:stevedore]{\ref{fig:stevedore}(c)}, a description of a link $L$ as a result of band surgery on an unlink yields a natural ribbon-immersed spanning surface $F \subset S^3$ with $\partial F=L$: simply attach the bands to a collection of embedded disks bounded by the unlink.

\begin{figure}\center
\def\svgwidth{\linewidth} 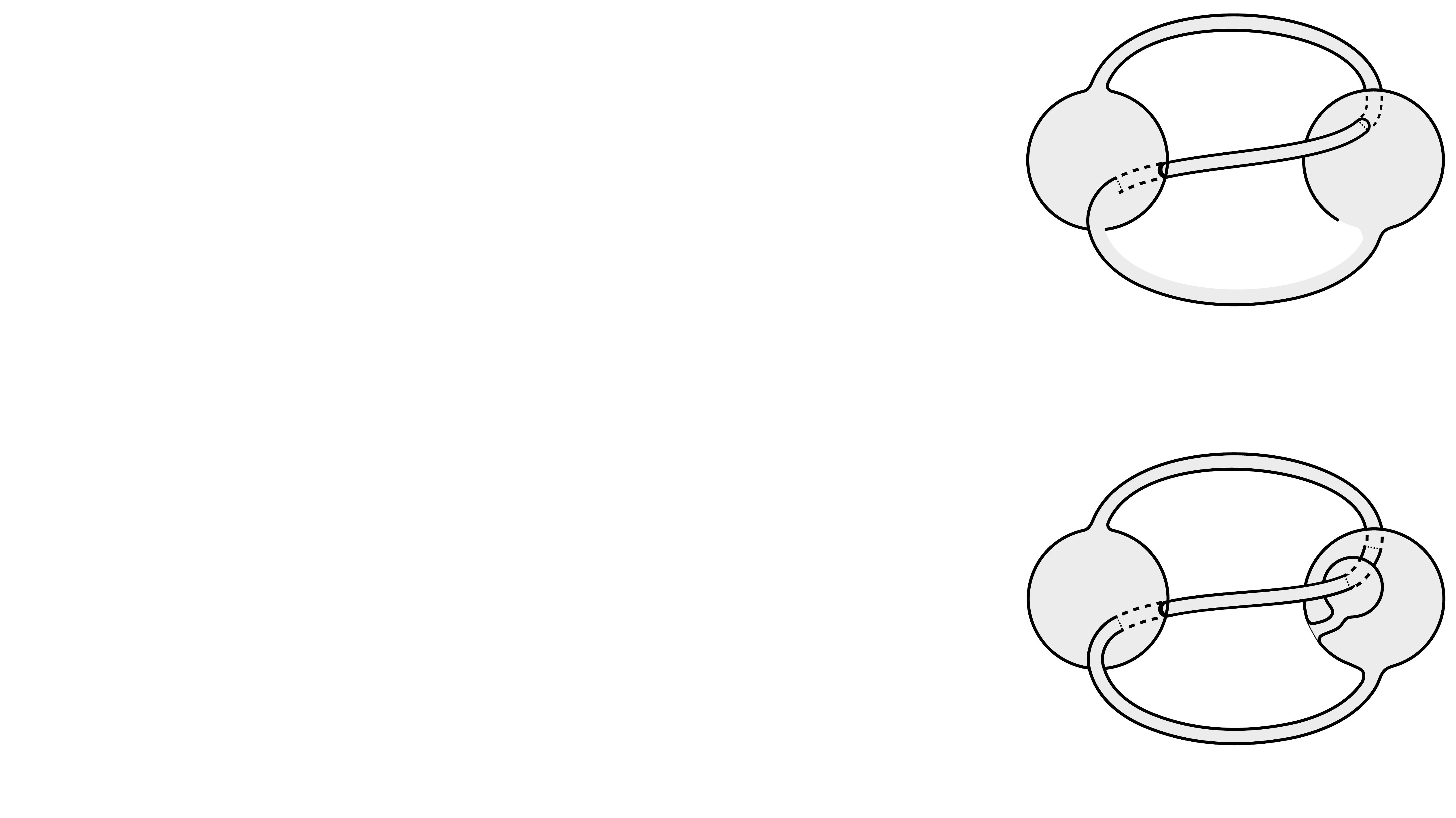 
\caption{Band surgeries on unlinks and the corresponding ribbon surfaces.} \label{fig:stevedore}
\end{figure}

In general, a ribbon-immersed surface $F\subset S^3$ will not be isotopic (through immersions that are injective along the boundary) to an embedded surface.  However, we can always find another ribbon-immersed surface $F' \subset S^3$ containing $F$ such that $F'$ is isotopic to an embedded surface.  The construction is summarized in Figure~\ref{fig:ribbon-int}. For an example, see Figure~\hyperref[fig:stevedore]{\ref{fig:stevedore}(f)}, where  the ribbon disk bounded by the stevedore knot from Figure~\hyperref[fig:stevedore]{\ref{fig:stevedore}(c)} is embedded in a ribbon disk for the unknot that is isotopic to a standard embedded disk. 

Under the correspondence between ribbon-immersed surfaces $F \subset S^3$ and ribbon surfaces $\Sigma \subset B^4$ (as in \cite{hass}), the preceding observation implies that every ribbon disk embeds as a connected component in a sublevel set of an unknotted ribbon disk. Restricting to the perspective of cross-sections, we obtain a topological version of Theorem~\ref{thm:embed}: 

\begin{observation}
Every knot that bounds a ribbon disk in $B^4$ arises as a link component in a cross-section of an unknotted ribbon disk in $B^4$.
\end{observation}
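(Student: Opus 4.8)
The plan is to deduce this from the topological construction of Figure~\ref{fig:ribbon-int} together with the correspondence of \cite{hass} between ribbon-immersed surfaces in $S^3$ and ribbon surfaces in $B^4$.

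Start with a knot $K$ bounding a ribbon disk $D\subset B^4$. Reading off the index-$0$ and index-$1$ critical points of the radial Morse function on $D$ — the $0$-handles are embedded disks, the $1$-handles are bands — and pushing this data out to the boundary sphere yields a ribbon-immersed disk $F\subset S^3$ with $\partial F=K$ whose push-in into $B^4$ recovers $D$. Apply the construction of \S\ref{sec:top}: there is a ribbon-immersed disk $F'\subset S^3$ containing $F$, again of the topological type of a disk, together with an isotopy through immersions injective along the boundary carrying $F'$ to an \emph{embedded} disk $F''\subset S^3$. Since $F''$ is an embedded disk its boundary is an unknot, and since the isotopy restricts to an isotopy of boundary knots, $\partial F'$ is an unknot as well.

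Now transport this back into $B^4$. Shrink $D$ so that it lies in the sub-ball of radius $r<1$, so that $K=\partial D$ lies in $S^3_r$; realized in the radial direction, the construction of \S\ref{sec:top} then attaches the extra disks and bands of $F'\setminus F$ inside the shell $\{r\le\rho\le1\}$ as a ribbon cobordism from $K$ up to the unknot. This produces a ribbon disk $\Sigma'\subset B^4$ with $\partial\Sigma'=\partial F'$ an unknot, and the isotopy from $F'$ to the embedded disk $F''$, transported through this correspondence, exhibits $\Sigma'$ as isotopic to an embedded disk in $\partial B^4$, i.e.\ $\Sigma'$ is unknotted. Placing the minima of the cobordism part just above radius $r$, the cross-section $L_{r'}=\Sigma'\cap S^3_{r'}$ for $r'$ slightly larger than $r$ is $K$ together with a (possibly empty) unlink; hence $K$ occurs as a link component of a cross-section of the unknotted ribbon disk $\Sigma'$.

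The content all sits in the construction of \S\ref{sec:top}, which we are assuming; the point requiring attention here is the bookkeeping that transports it faithfully through the correspondence of \cite{hass} — in particular, arranging the radial function on $\Sigma'$ so that every critical point inherited from $F$ lies below radius $r$ and every critical point inherited from $F'\setminus F$ lies above it (with the new minima below the new saddles), so that $D$ is genuinely an isolated component of the sublevel set $\{\rho\le r'\}$ and $K$ a genuine component of $L_{r'}$.
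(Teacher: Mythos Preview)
Your proposal is correct and follows essentially the same approach as the paper: you invoke the Whitney-trick construction of \S\ref{sec:top} to embed the ribbon-immersed disk $F$ for $K$ inside a larger ribbon-immersed disk $F'$ isotopic to an embedded disk, then transport this through the \cite{hass} correspondence to $B^4$. The paper treats this observation as an immediate consequence of the preceding discussion and states it in a single sentence; your version simply spells out the radial bookkeeping (placing the critical points of $D$ below level $r$ and those of $F'\setminus F$ above) that makes $K$ appear as an honest component of a cross-section.
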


\section{Quasipositive band surgeries and algebraic curves}\label{sec:100a}

\subsection{Bands and Bennequin surfaces}

In \cite{bkl}, Birman, Ko, and Lee define an alternative presentation of the $n$-stranded braid group $B_n$ using generators of the form
\begin{equation*}
\sigma_{i,j}= (\sigma_i \cdots  \sigma_{j-2}) \sigma_{j-1} (\sigma_i \cdots \sigma_{j-2})^{-1}, \qquad 1 \leq i \leq j \leq n.
\end{equation*}
Such generators had been used earlier by Rudolph \cite{rudolph:braided-surface}, who termed $\sigma_{i,j}$ a (positive) \textbf{embedded band}.  
An expression of a braid in $B_n$ as a word $w$ in the generators $\sigma_{i,j}$ is called an \textbf{embedded bandword}. Given an embedded bandword $w$ for $\beta$, there is a canonical Seifert surface $F_w$, called the \textbf{Bennequin surface}, constructed from $n$ parallel disks by attaching a positively (resp.~negatively) twisted embedded band between the $i^{th}$ and $j^{th}$ disks for each term $\sigma_{i,j}$ (resp.~$\sigma_{i,j}^{-1}$) in $w$; see Example~\ref{ex:bennequin} below. Note that the Euler characteristic of the resulting surface is $n-|w|$, where $|w|$ denotes the length of the embedded bandword.

\begin{example}\label{ex:bennequin}

The Bennequin surfaces in Figure~\ref{fig:bennequin} correspond to embedded bandwords $\beta_0= \sigma_{1,4} \sigma_2 \sigma^{-1}_{1,3} \sigma_{2,4} \sigma_{1,3}$ and $\beta_1=\sigma_{1,4} \sigma_2 \sigma_1 \sigma_{1,3}^{-1} \sigma_1^{-1} \sigma_{2,4} \sigma_1 \sigma_{1,3} \sigma_1^{-1}$. These are minimal-genus Seifert surfaces for $\beta_0$ and $\beta_1$. Indeed, we can  confirm $g(\beta_0)=1$ and $g(\beta_1)=3$ using their Alexander polynomials (which have degree two and six, respectively) and the bound $g(K) \geq \deg(\Delta_K)/2$, where $K$ is any knot and $\deg(\Delta_K)$ is defined as the breadth of $\Delta_K$.

\begin{figure}\center
\def\svgwidth{\linewidth} 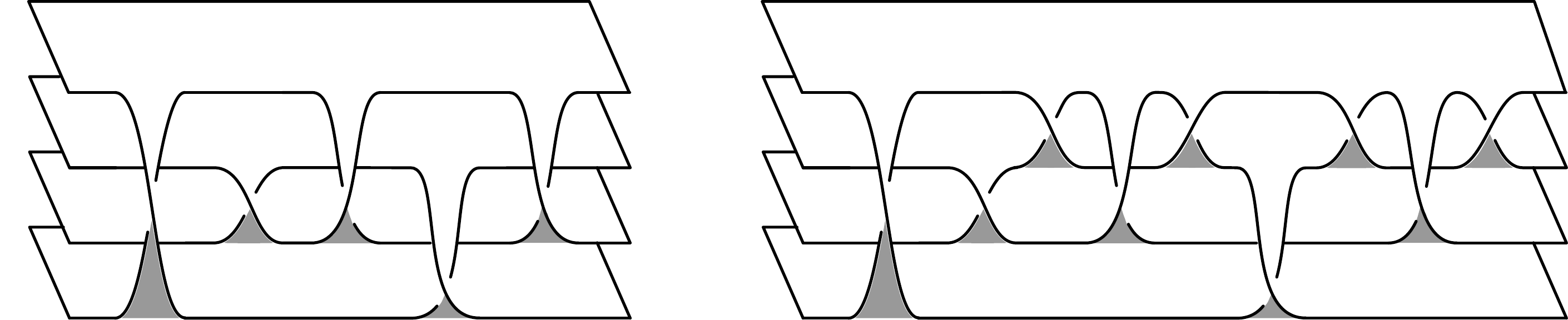 \caption{Bennequin surfaces for the braids $\beta_0$ (left) and $\beta_1$ (right).} \label{fig:bennequin}
\end{figure}

\end{example}

A braid is called \textbf{strongly quasipositive} if it is a product of positive embedded bands (\cite{rudolph:kauffman-bound}, cf.~\cite{rudolph:braided-surface}). More generally, we  define a braid to be \textbf{quasipositive} if it is a product of arbitrary conjugates of the standard positive generators, i.e.~a product of subwords $w \sigma_i w^{-1}$, which we simply call (positive) \textbf{bands}. An expression of a braid as a word in these bands is called a \textbf{bandword}, and a construction analogous to the one described above associates a ribbon-immersed spanning surface to each bandword.

\subsection{Construction of algebraic curves} We now build the desired algebraic curves using a special type of \emph{quasipositive band surgery} --- that is, the addition of a band $w \sigma_i w^{-1}$ to a quasipositive braid. This relies on the following lemma, due to Feller \cite[Lemma 6]{feller:optimal}.

\begin{lemmaRealization*}[Feller \cite{feller:optimal}, cf.~Orevkov \cite{orevkov:diagrams}, Rudolph \cite{rudolph:qp-alg}] 
Let $\beta$ and $\gamma$ be quasipositive $n$-braid words such that $\gamma$ can be obtained from $\beta$ by applying a finite number of of braid group relations, conjugations, and additions of a conjugate of a positive generator anywhere in the braid. Then there exists a polynomial of the form
$p(z,w)=w^n+c_{n-1}(z) w^{n-1}+\cdots+c_0(z)$
and constants $0<r<R$ such that the intersections of $\Sigma = \{p(z,w)=0\}$ with $S^3_r$ and $S^3_R$ are isotopic to $\beta$ and $\gamma$, respectively.
\end{lemmaRealization*}

To prove the main theorem, it suffices to adapt the topological construction from \S\ref{sec:top} using quasipositive braids and band surgeries.

\begin{proof}[Proof of Theorem~\ref{thm:embed}]
Let $\beta \in B_n$ be a quasipositive braid expressed as a product of positive bands $w \sigma_i w^{-1}$. Let $m$ denote the number of singularities  in the ribbon-immersed Bennequin surface $F$ for $\beta$. We begin by constructing a quasipositive braid $\beta' \in B_{n+m}$ that contains $\beta$ as a sublink.  Each ribbon singularity in $F$ corresponds to a region in the braid as depicted in Figure~\hyperref[fig:modification]{\ref{fig:modification}(a)} (or its rotation by 180$^\circ$, with strand orientations reversed); here the dashed strands represent the crossing strands in the positive band and may pass over or under the black strand in the center of the region where the dashed strands are not drawn. After braid isotopy of the crossing strands, we obtain the quasipositive braid in Figure~\hyperref[fig:modification]{\ref{fig:modification}(b)}. We then introduce an additional strand as shown in Figure~\hyperref[fig:modification]{\ref{fig:modification}(c)}; the new strand passes under all others except at the two points shown in the figure. After performing these modifications for each of the $m$ original  singularities, we obtain a quasipositive braid $\beta' \in B_{n+m}$ that contains $\beta$ as a sublink. Up to smooth isotopy, $\beta'$ is obtained from $\beta$ by adding $m$ unknots, linked with $\beta$ as depicted in Figure~\hyperref[fig:modification]{\ref{fig:modification}(d)}.

\begin{figure}\center
\def\svgwidth{\linewidth} 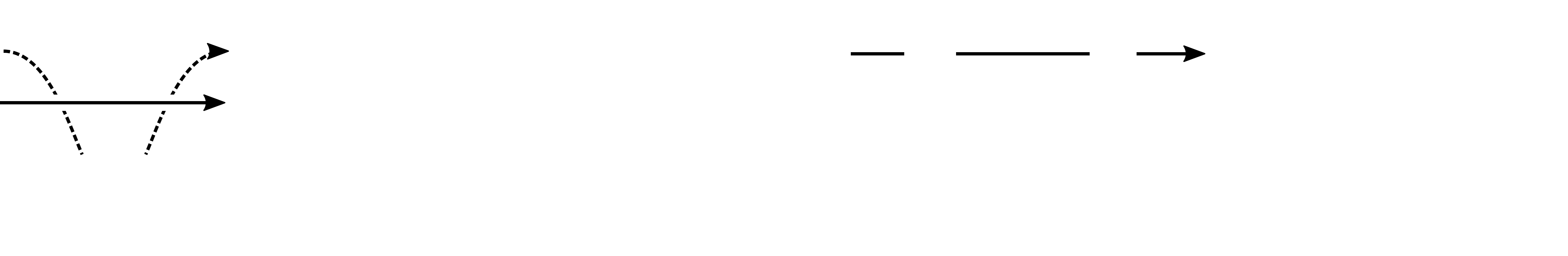 \caption{Modifying the original braid (a-c) and its closure (d).} \label{fig:modification}
\end{figure}

\begin{figure}[b]\center
\def\svgwidth{.915\linewidth} 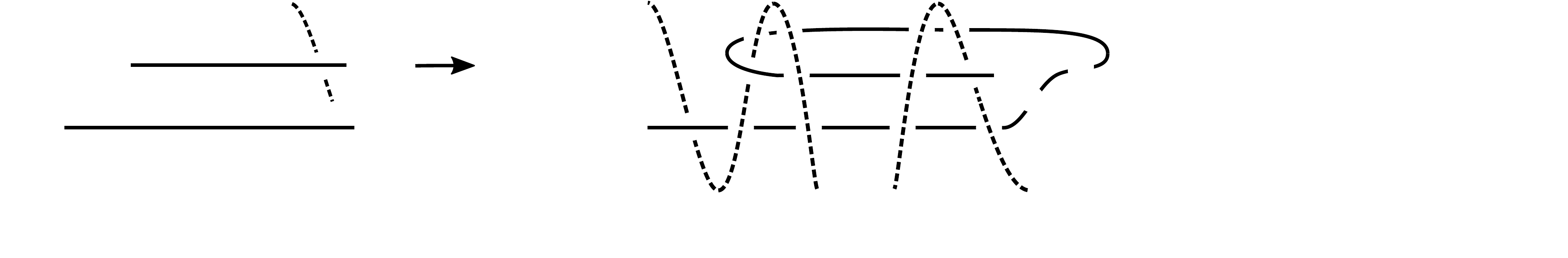 \caption{Band surgery and isotopy of the braid closure.} \label{fig:gamma}
\end{figure} 

Let $\gamma \in B_{n+m}$ be the braid obtained from $\beta'$ by adding a positive generator within each of the $m$ neighborhoods as shown in Figure~\hyperref[fig:gamma]{\ref{fig:gamma}(a)}.  By the lemma, there exists an algebraic curve $\Sigma \subset \cc^2$ and constants $0<r<R$ such that $\beta'=\Sigma \cap \partial B^4_r$ and $\gamma=\Sigma \cap \partial B^4_R$.

Next we show that $\gamma$ is an unknot. Through the smooth isotopy depicted in Figure~\hyperref[fig:gamma]{\ref{fig:gamma}(b-c)}, we see that $\gamma \in B_{n+m}$ is isotopic to a quasipositive braid $\gamma' \in B_n$ whose ribbon-immersed Bennequin surface in fact contains no singularities and is therefore embedded. Observe that this braid $\gamma'$ also has the same number of bands as $\beta$.  Since the slice-Bennequin inequality is sharp for quasipositive braids (\cite{rudolph:qp-obstruction,hedden:pos}), the number of bands in $\beta$ equals $n+2g_*(\beta)-1=n-1$. It follows that the Benequin surface for $\gamma'$ has Euler characteristic $-1$, implying that $\gamma'$ (and thus $\gamma$) is an unknot. 
 
Finally, we show that the compact piece of $\Sigma$ bounded by $\gamma$ is a holomorphically unknotted disk. For notational convenience, we rescale $\cc^2$ so that $R=1$ and let $\Sigma_1$ denote the resulting holomorphic curve in the unit four-ball.  That $\Sigma_1$ is a disk follows from the fact that holomorphic curves are genus-minimizing \cite{km:thom,rudolph:qp-obstruction}. To show that $\Sigma_1$ is holomorphically unknotted, we employ Eliashberg's technique of filling by holomorphic disks (as in \cite[Proposition~2]{boileau-fourrier}). The braid $\gamma$ can be viewed as a transverse unknot (in the contact-geometric sense) with self-linking number $-1$. It follows that it bounds an embedded disk $D\subset S^3$ whose characteristic foliation (see \cite{etnyre:intro})  is radial with a single positive elliptic point $p \in D$. There is a small interval $[0,\epsilon)$ and a family of disjoint holomorphic ``Bishop disks'' $\Delta_t \subset B^4$ for $t \in [0,\epsilon)$  emerging from $p$ such that $\Delta_t$ is properly embedded in $B^4$ with boundary in $D\subset S^3$ for $t >0 $ (and $\Delta_0$ is a degenerate disk identified with the given elliptic point). By \cite[Corollary~2.2B]{yasha:lagr-cyl}, the Bishop family can be extended to a family of holomorphic disks foliating a three-ball bounded by the piecewise-smooth two-sphere $D \cup \Sigma_1$. In particular, the extended family includes $\Sigma_1$, so $\Sigma_1$ can be unknotted through holomorphic disks.
\end{proof}

\begin{example}
The quasipositive slice knot $m(8_{20})$ is represented by the 3-braid $\beta=\sigma_2 \sigma_1^2\sigma_2\sigma_1^{-2} \sigma_2^{-1}\sigma_{1,3}$, which embeds in the 5-braid $\beta'=w \sigma_2 w^{-1} \sigma_{1,5}$, where $w= \sigma_4^{-1} \sigma_3^2 \sigma_4^{-2} \sigma_3^{-1} \sigma_2^{-1} \sigma_1^2 \sigma_2^{-2} \sigma_1^{-1} \sigma_1 \sigma_2$. Following the above procedure, $\beta'$ arises as a cross-section of a holomorphic disk in the four-ball bounded by the unknot $\gamma= w' \sigma_2 w^{-1} \sigma_{1,5}$, where $w'$ is obtained from $w$ by the two replacements $\sigma_3^2 \to \sigma_3 \sigma_4 \sigma_3$ and $\sigma_1^2 \to \sigma_1 \sigma_2 \sigma_1$.
\end{example}

\begin{remark}
The construction from the proof above generalizes to show that every quasipositive knot $K$ embeds as a link component in a cross-section of an algebraic curve in the four-ball whose boundary is a strongly quasipositive knot with the same slice genus as $K$.
\end{remark}

Next, we produce the algebraic curves needed for the refined solution to Problem~\hyperref[prob:kirby]{1.100(A)}. By \cite[Theorem~1.6]{hs:surgery} (see also \cite{st:genus}), band surgery decreases the Seifert genus of a knot if and only if the knot has a minimal genus Seifert surface that contains the band. Therefore, to violate the inequality from Problem~\hyperref[prob:kirby]{1.100(A)}, we seek   pairs of quasipositive braids $\beta$ and $\gamma$ such that $\gamma$ is obtained from $\beta$ by band surgery along a band lying inside a minimal genus Seifert surface for $\beta$.

\begin{proof}[Proof of Theorem~\ref{thm:100a}]
For each $n \geq 0$, define a pair of four-braids as follows:
\begin{align*}
\beta_n &=\sigma_{1,4} \sigma_2 (\sigma_1^n \sigma_{1,3}^{-1} \sigma_1^{-n}) \sigma_{2,4} (\sigma_1^n \sigma_{1,3} \sigma_1^{-n}) 
\\
\gamma_n &= \sigma_{1,4} \sigma_2  \sigma_{2,4} \sigma_1^{n} \sigma_{1,3} \sigma_2^{n+1}.
\end{align*}
Observe that each $\beta_n$ is quasipositive and each $\gamma_n$ is strongly quasipositive. Bennequin surfaces for $\beta_0$ and $\beta_1$ appear in Figure~\ref{fig:bennequin}. We can obtain $\gamma_n$ from $\beta_n$ by adding $2n+2$ positive bands: Adding $\sigma_{1,3}$ in the middle of the term $(\sigma_1^n \sigma_{1,3}^{-1} \sigma_1^{-n})$ in $\beta_n$ eliminates that entire term, adding $\sigma_1^n$ to the end of $\beta_n$ eliminates the term $\sigma_1^{-n}$, and then $\sigma_2^{n+1}$ is added to the end of the word. (The final term is added to ensure that $\gamma_n$ is a knot and to simplify the expression of its Seifert genus.) By the lemma, there exists an algebraic curve $\Sigma_n$ and constants $0<r <R$ such that $\beta_n= \Sigma_n \cap \partial B^4_r$ and $\gamma_n = \Sigma_n \cap \partial B^4_R$.

It remains to calculate the Seifert genera of $\beta_n$ and $\gamma_n$. Since $\gamma_n$ is strongly quasipositive, its Bennequin surface realizes the Seifert genus of $\gamma_n$; see \cite{rudolph:qp-obstruction}. This surface is built from 4 disks and $5+2n$ bands and has connected boundary, hence has 
genus 
$n+1$.

To compute the Seifert genus of $\beta_n$, we show that its Bennequin surface is also genus-minimizing. The Bennequin surface for $\beta_n$ is built from 4 disks and $5+4n$ bands, hence has Euler characteristic $\chi=-1-4n$. Since its boundary is a knot, the genus is $(1-\chi)/2=2n+1$. For $n=0$ and $n=1$, it is easy to confirm that $g(\beta_n)=2n+1$, and this was already completed for $n=0$ and $n=1$ in Example~\ref{ex:bennequin}.  We proceed by induction: Let us assume the Bennequin surface for $\beta_{n-1}$ is minimal, where $n \geq 2$. The braid $\beta_n$ can be viewed as being obtained from $\beta_{n-1}$ by four replacements of the form $\sigma_1^{\pm 1} \to \sigma_1^{\pm 2}$. As illustrated in Figure~\ref{fig:plumbing}, the replacement $\sigma_1 \to \sigma_1^2$ can be viewed as the result of plumbing a positive Hopf band across the $\sigma_1$-band. The replacement $\sigma_1^{-1} \to \sigma_1^{-2}$ is analogous, realized by plumbing a negative Hopf band across a $\sigma_1^{-1}$-band. It follows that the Bennequin surface for $\beta_n$ is obtained from that of $\beta_{n-1}$ by repeated Hopf plumbing. Since the Bennequin surface for $\beta_{n-1}$ is genus-minimizing, it follows from \cite{gabai:murasugi} that the Bennequin surface for $\beta_n$ is genus-minimizing as well.
\end{proof}

\begin{figure}\center
\def\svgwidth{.9\linewidth} 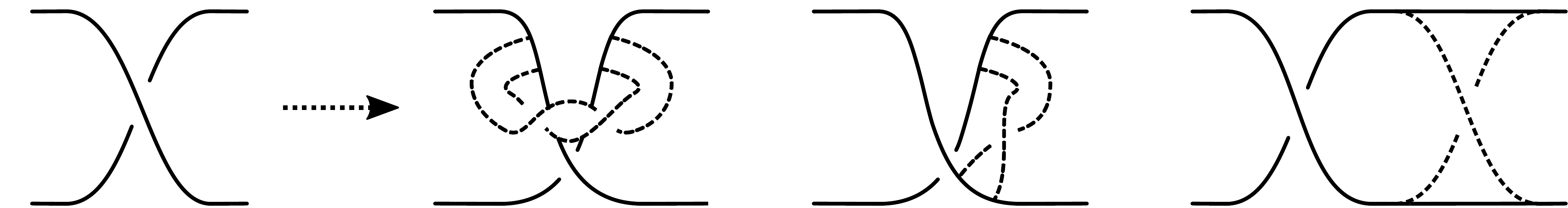 \caption{Hopf plumbing across a band of like sign.} \label{fig:plumbing}
\end{figure}

\section{Unknotted Lagrangian disks with nontrivial cross-sections}\label{sec:lagr}

In this final section, we adapt the construction from \S\ref{sec:top} to the Lagrangian setting. We assume the reader is familiar with the basics of Legendrian knot theory and front diagrams for Legendrian links; see, for example, Etnyre's survey \cite{etnyre:knot-intro}. For background on Lagrangian fillings of Legendrian links, see \cite{chantraine}.

We use the following theorem to construct the desired Lagrangian surfaces:

\begin{theorem}[\cite{bst:construct, chantraine, ehk:cobordisms, rizell:surgery}]
\label{thm:construct}
If two Legendrian links $L_{\pm}$ in the standard contact $S^3$ are related by a sequence of any of the following three moves, then there exists an exact, embedded, orientable, and collared Lagrangian cobordism from $L_-$ to $L_+$.
\begin{description}
\item[Isotopy] $L_{-}$ and $L_{+}$ are Legendrian isotopic.
\item[$\boldsymbol{0}$-Handle] The front of $L_{+}$ is the same as that of $L_{-}$ except for the addition of a disjoint Legendrian unknot as in the left side of Figure~\ref{fig:handle}.
\item[$\boldsymbol{1}$-Handle] The fronts of $L_{\pm}$ are related as in the right side of Figure~\ref{fig:handle}.
\end{description}
\end{theorem}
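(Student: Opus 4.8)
The plan is to assemble the statement from three local constructions, one per move, together with the observation that exactness, embeddedness, orientability, and the collar condition are all preserved under concatenation of Lagrangian cobordisms in the symplectization $\bigl(\rr \times S^3,\, d(e^t\alpha_{\mathrm{std}})\bigr)$ --- the collar condition being exactly what lets two such cobordisms be glued smoothly along a shared cylindrical end. So I would first reduce to the case of a single elementary move. In each case the move is supported in a small ball in $S^3$, so the cobordism will be the trivial cylinder over the part of $L$ outside that ball, and all the content is in producing the local model and arranging that its Liouville primitive vanishes near where the model meets the surrounding cylinder (this last point being what makes the result \emph{collared} in the sense of \cite{chantraine:collar}).

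For the \textbf{Isotopy} move I would use the trace of the Legendrian isotopy $\{L_t\}$: after reparametrizing in $t$ so that $L_t$ is constant near the two ends, the family sweeps out an embedded Lagrangian cylinder in the symplectization with cylindrical ends over $L_-$ and $L_+$, and its exactness follows by lifting the Legendrian isotopy to a contact isotopy and integrating the associated contact Hamiltonians to a primitive; this is Chantraine's construction \cite{chantraine}. Orientability is immediate. For the \textbf{$0$-handle} move the new component is the standard Legendrian unknot with $\tb = -1$, and I would take the disjoint union of the existing cobordism with the standard embedded Lagrangian disk filling this unknot (the local model of a Lagrangian minimum, e.g.~as in \cite{bst:construct, ehk:cobordisms}); this disk is simply connected, hence exact, and cylindrical near its end.

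The \textbf{$1$-handle} move is the heart of the matter. Here I would insert the standard Lagrangian saddle cobordism whose front projection realizes the pinch move of Figure~\ref{fig:handle}, as constructed by Bourgeois--Sabloff--Traynor \cite{bst:construct} --- equivalently the Lagrangian surgery of Dimitroglou Rizell \cite{rizell:surgery} or the pinch/ambient-surgery cobordism of Ekholm--Honda--K\'alm\'an \cite{ehk:cobordisms}. One writes this saddle down explicitly in a Darboux chart of the symplectization; the two strands being pinched carry the orientations induced from the ambient link --- which are precisely those for which the saddle is orientable --- and the model is arranged so that its Liouville primitive is supported near the handle, so that it glues to the surrounding cylinder to yield an exact, embedded, orientable, collared cobordism. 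Concatenating these local cobordisms in the order dictated by the given sequence of moves then produces the desired cobordism from $L_-$ to $L_+$.

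The step I expect to be the main obstacle is the $1$-handle construction: one genuinely has to exhibit an explicit embedded Lagrangian saddle in a symplectic chart, check it is exact with a primitive supported in the handle region, and verify that its boundary behavior is the front-diagrammatic pinch move while leaving the rest of the link cylindrical. The Isotopy and $0$-handle moves are comparatively routine, and the concatenation step is formal once the collar condition is in place --- which is why I would build all three local models with cylindrical ends from the outset.
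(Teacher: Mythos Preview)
The paper does not prove this theorem at all: it is quoted from the literature (the citations \cite{bst:construct, chantraine, ehk:cobordisms, rizell:surgery} appear in the theorem header) and used as a black box in the proof of Theorem~\ref{thm:lagr}. So there is no proof in the paper to compare your proposal against.

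That said, your sketch is a faithful outline of how the cited references establish the result: reduce to a single elementary move by concatenation (which is exactly what the collar condition enables), invoke Chantraine's isotopy-trace cylinder for the first move, the standard Lagrangian cap for the $0$-handle, and the explicit saddle model of Bourgeois--Sabloff--Traynor / Ekholm--Honda--K\'alm\'an / Dimitroglou Rizell for the $1$-handle. Your identification of the $1$-handle as the substantive step is correct --- that is where the cited papers do the real work of writing down a local Lagrangian with the right exactness and boundary behavior. If you were actually asked to supply a proof here rather than a citation, the main thing missing from your outline is the explicit local model for the saddle (say, the generating-family or front-spinning description), together with the verification that the primitive extends by zero over the cylindrical part; but as a plan it is sound.
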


\begin{figure}[b]
\includegraphics[width=.5\linewidth]{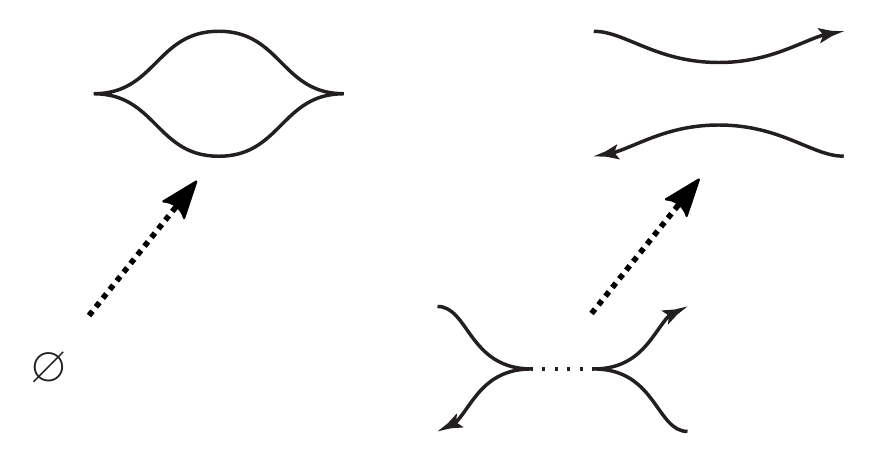} 
\caption{Diagram moves corresponding to attaching a 0-handle and an oriented 1-handle.}
\label{fig:handle}
\end{figure}

\begin{remark} A cross-section $\Sigma \cap S^3_r$ of a Lagrangian surface $\Sigma \subset B^4$ is \textbf{collared} if the radial vector field $\partial_r$ is tangent to $\Sigma$ near the cross-section $\Sigma \cap S^3_r$. More generally, a  cross-section $\Sigma \cap S^3_r$ is \textbf{collarable} if there exists $\epsilon>0$ and a family of isotopic Lagrangian surfaces $\Sigma_t$ with $\Sigma_0=\Sigma$ such that $\Sigma_t$ is transverse to $S^3_r$ for all $t \in [0,1]$ and $\Sigma_1 \cap S^3_r$ is collared.
\end{remark}

\begin{example}\label{ex:nontrivial}
Consider the sequences of front diagrams in Figure~\ref{fig:lagr}. The implied first step in each sequence is the attachment of some number of 0-handles. Applying Theorem~\ref{thm:construct}, part (a) corresponds to a Lagrangian filling of the knot $m(9_{46})$, which is seen to be a disk because it consists of two 0-handles joined by a single 1-handle. (This sequence is adapted from \cite[Example~4.1]{positivity}.) Part (b) corresponds to a Lagrangian disk bounded by a Legendrian unknot. Observe that the penultimate  front  in this sequence depicts a two-component link formed from the knot $11n_{139}$ and an unknot. 
\end{example}

\begin{figure}
\def\svgwidth{.9\linewidth} 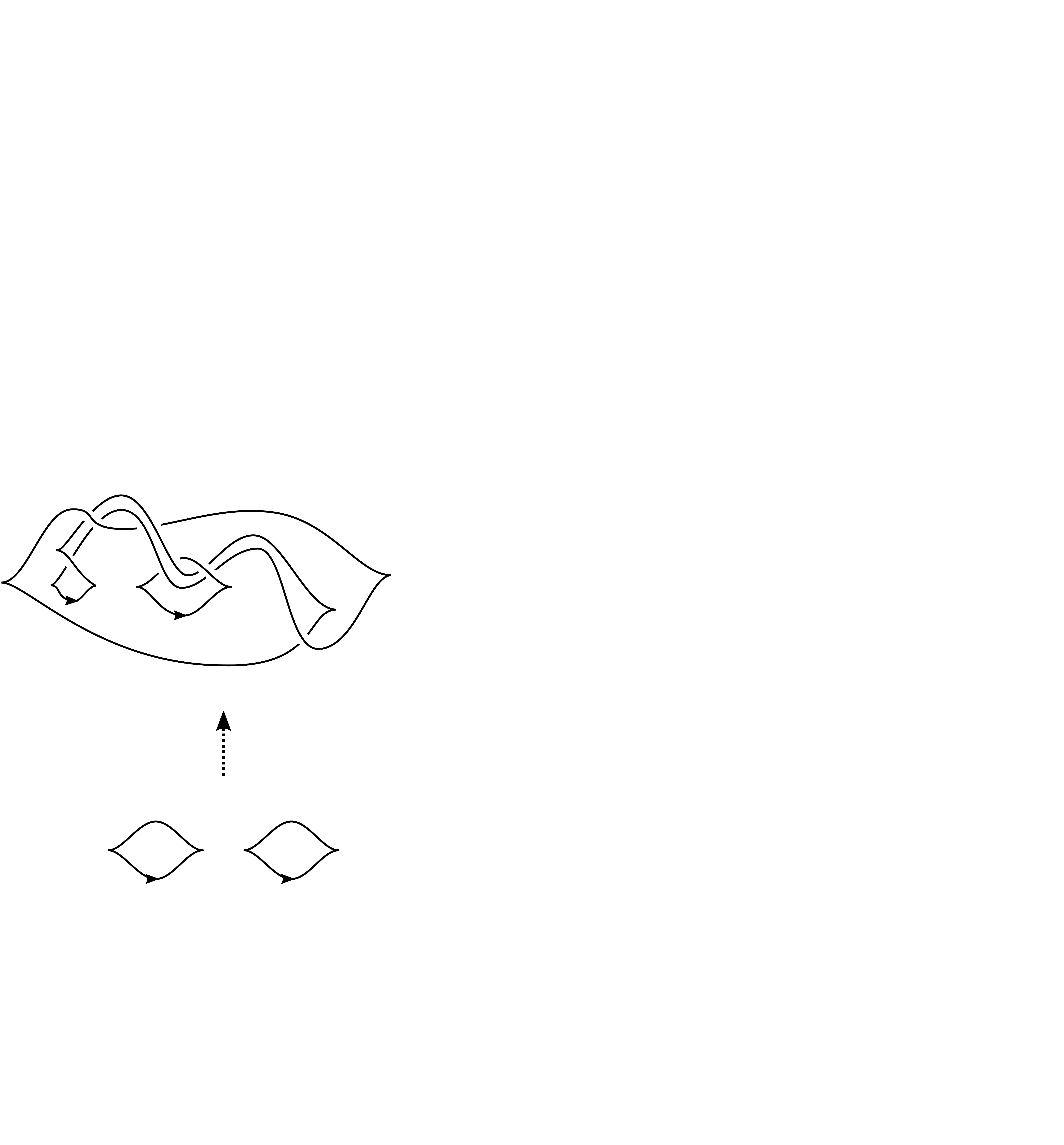 
\caption{Diagram sequences corresponding to Lagrangian disks.}
\label{fig:lagr}
\end{figure}

\begin{figure}
\def\svgwidth{\linewidth} 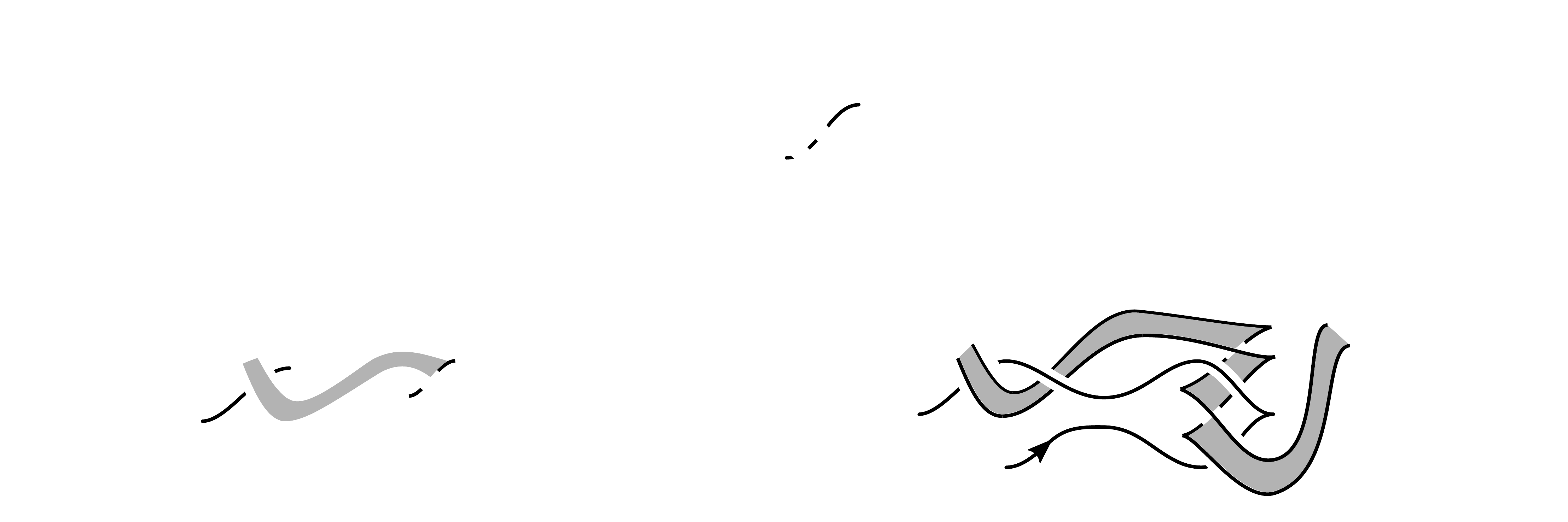 
\caption{Local modification to remove a ribbon singularity.}
\label{fig:replacement}
\end{figure}

\begin{proof}[Proof of Theorem~\ref{thm:lagr}] We begin by pointing out that the sequence in part (b) of Figure~\ref{fig:lagr} is obtained from the one in part (a) by applying the local modification depicted in Figure~\ref{fig:replacement}. We will prove the theorem by generalizing this construction.

Using Theorem~\ref{thm:construct}, it is simple to produce infinitely many Legendrian knots $K$ that are filled by Lagrangian disks. Moreover, we can produce infinite families such that the (necessarily ribbon) Lagrangian fillings give rise to ribbon-immersed surfaces in $S^3$ whose ribbon singularities have the local form depicted in the top left of Figure~\ref{fig:replacement}.  Let $K'$ denote the knot obtained from $K$ by adding a full twist to the band as shown. By construction, $K'$ also bounds a Lagrangian disk built using Theorem~\ref{thm:construct}. By introducing a 0-handle, we can embed $K'$ in  a two-component link as shown in the third stage of Figure~\ref{fig:replacement}. Adding a 1-handle as shown produces a knot that bounds a ribbon-immersed surface with the same number of 0- and 1-handles as the ribbon-immersed surface bounded by $K$, but which can be isotoped to have one fewer ribbon singularity.  Applying this to all ribbon singularities, we obtain a knot $K'$ as above and a Lagrangian filling of the unknot that contains $K'$ as a link component in a collared cross-section. For an appropriate choice of $K$, it is easy to ensure that $K'$ (and thus the cross-section containing it) is nontrivial.

To see that these Lagrangian fillings of the unknot are unknotted as embedded disks in $B^4$, we appeal to work of Eliashberg and Polterovich \cite{ep:local-knots} which implies that any two properly embedded Lagrangian disks in $B^4$ with collared, unknotted boundary are in fact isotopic through such Lagrangian disks. Therefore, the fillings of the unknot constructed above are isotopic to the standard unknotted filling of the unknot.
\end{proof}

\bibliographystyle{alpha}
\bibliography{biblio}

\begin{thebibliography}{Rud83b}

\bibitem[BF98]{boileau-fourrier}
M.~Boileau and L.~Fourrier.
\newblock Knot theory and plane algebraic curves.
\newblock {\em Chaos Solitons Fractals}, 9(4-5):779--792, 1998.
\newblock Knot theory and its applications.

\bibitem[BKL98]{bkl}
Joan Birman, Ki~Hyoung Ko, and Sang~Jin Lee.
\newblock A new approach to the word and conjugacy problems in the braid
  groups.
\newblock {\em Adv. Math.}, 139(2):322--353, 1998.

\bibitem[BO01]{bo:qp}
M.~Boileau and S.~Orevkov.
\newblock Quasi-positivit\'e d'une courbe analytique dans une boule
  pseudo-convexe.
\newblock {\em C. R. Acad. Sci. Paris S\'er. I Math.}, 332(9):825--830, 2001.

\bibitem[Bor12]{borodzik:morse}
Maciej Borodzik.
\newblock Morse theory for plane algebraic curves.
\newblock {\em J. Topol.}, 5(2):341--365, 2012.

\bibitem[BST15]{bst:construct}
Fr\'ed\'eric Bourgeois, Joshua~M. Sabloff, and Lisa Traynor.
\newblock Lagrangian cobordisms via generating families: construction and
  geography.
\newblock {\em Algebr. Geom. Topol.}, 15(4):2439--2477, 2015.

\bibitem[Cha10]{chantraine}
B.~Chantraine.
\newblock On {L}agrangian concordance of {L}egendrian knots.
\newblock {\em Algebr. Geom. Topol.}, 10:63--85, 2010.

\bibitem[Cha12]{chantraine:collar}
Baptiste Chantraine.
\newblock Some non-collarable slices of {L}agrangian surfaces.
\newblock {\em Bull. Lond. Math. Soc.}, 44(5):981--987, 2012.

\bibitem[CNS16]{cornwell-ng-sivek:obstructions}
Christopher Cornwell, Lenhard Ng, and Steven Sivek.
\newblock Obstructions to {L}agrangian concordance.
\newblock {\em Algebr. Geom. Topol.}, 16(2):797--824, 2016.

\bibitem[DR16]{rizell:surgery}
Georgios Dimitroglou~Rizell.
\newblock Legendrian ambient surgery and {L}egendrian contact homology.
\newblock {\em J. Symplectic Geom.}, 14(3):811--901, 2016.

\bibitem[EHK16]{ehk:cobordisms}
Tobias Ekholm, Ko~Honda, and Tam\'as K\'alm\'an.
\newblock Legendrian knots and exact {L}agrangian cobordisms.
\newblock {\em J. Eur. Math. Soc. (JEMS)}, 18(11):2627--2689, 2016.

\bibitem[Eli95]{yasha:lagr-cyl}
Ya. Eliashberg.
\newblock Topology of {$2$}-knots in {${\bf R}\sp 4$} and symplectic geometry.
\newblock In {\em The Floer memorial volume}, volume 133 of {\em Progr. Math.},
  pages 335--353. Birkh\"auser, Basel, 1995.

\bibitem[EP96]{ep:local-knots}
Ya. Eliashberg and L.~Polterovich.
\newblock Local {L}agrangian {$2$}-knots are trivial.
\newblock {\em Ann. of Math. (2)}, 144(1):61--76, 1996.

\bibitem[Etn03]{etnyre:intro}
John~B. Etnyre.
\newblock Introductory lectures on contact geometry.
\newblock In {\em Topology and geometry of manifolds (Athens, GA, 2001)},
  volume~71 of {\em Proc. Sympos. Pure Math.}, pages 81--107. Amer. Math. Soc.,
  Providence, RI, 2003.

\bibitem[Etn05]{etnyre:knot-intro}
J.~Etnyre.
\newblock Legendrian and transversal knots.
\newblock In {\em Handbook of knot theory}, pages 105--185. Elsevier B. V.,
  Amsterdam, 2005.

\bibitem[Fel16]{feller:optimal}
Peter Feller.
\newblock Optimal cobordisms between torus knots.
\newblock {\em Comm. Anal. Geom.}, 24(5):993--1025, 2016.

\bibitem[Fie89]{fiedler}
Thomas Fiedler.
\newblock Complex plane curves in the ball.
\newblock {\em Invent. Math.}, 95(3):479--506, 1989.

\bibitem[Gab83]{gabai:murasugi}
David Gabai.
\newblock The {M}urasugi sum is a natural geometric operation.
\newblock In {\em Low-dimensional topology ({S}an {F}rancisco, {C}alif.,
  1981)}, volume~20 of {\em Contemp. Math.}, pages 131--143. Amer. Math. Soc.,
  Providence, RI, 1983.

\bibitem[Gor81]{gordon:ribbon}
C.~McA. Gordon.
\newblock Ribbon concordance of knots in the {$3$}-sphere.
\newblock {\em Math. Ann.}, 257(2):157--170, 1981.

\bibitem[Has83]{hass}
Joel Hass.
\newblock The geometry of the slice-ribbon problem.
\newblock {\em Math. Proc. Cambridge Philos. Soc.}, 94(1):101--108, 1983.

\bibitem[Hay17]{hayden:stein}
Kyle Hayden.
\newblock Quasipositive links and {S}tein surfaces.
\newblock Available as arXiv:1703.10150, March 2017.

\bibitem[Hed10]{hedden:pos}
Matthew Hedden.
\newblock Notions of positivity and the {O}zsv\'ath-{S}zab\'o concordance
  invariant.
\newblock {\em J. Knot Theory Ramifications}, 19(5):617--629, 2010.

\bibitem[HS00]{hs:surgery}
Mikami Hirasawa and Koya Shimokawa.
\newblock Dehn surgeries on strongly invertible knots which yield lens spaces.
\newblock {\em Proc. Amer. Math. Soc.}, 128(11):3445--3451, 2000.

\bibitem[HS15]{positivity}
Kyle Hayden and Joshua~M. Sabloff.
\newblock Positive knots and {L}agrangian fillability.
\newblock {\em Proc. Amer. Math. Soc.}, 143(4):1813--1821, 2015.

\bibitem[Kir97]{kirby}
Rob Kirby.
\newblock Problems in low-dimensional topology.
\newblock In William~H. Kazez, editor, {\em Geometric topology ({A}thens, {GA},
  1993)}, volume~2 of {\em AMS/IP Stud. Adv. Math.}, pages 35--473. Amer. Math.
  Soc., Providence, RI, 1997.

\bibitem[KM94]{km:thom}
P.~B. Kronheimer and T.~S. Mrowka.
\newblock The genus of embedded surfaces in the projective plane.
\newblock {\em Math. Res. Lett.}, 1(6):797--808, 1994.

\bibitem[MZ18]{meier-zupan:email}
Jeffrey Meier and Alexander Zupan.
\newblock Private communication, January 17, 2018.

\bibitem[Ore96]{orevkov:diagrams}
S.~Yu. Orevkov.
\newblock Rudolph diagrams and analytic realization of the {V}itushkin
  covering.
\newblock {\em Mat. Zametki}, 60(2):206--224, 319, 1996.

\bibitem[Rud83a]{rudolph:braided-surface}
L.~Rudolph.
\newblock Braided surfaces and {S}eifert ribbons for closed braids.
\newblock {\em Comment. Math. Helv.}, 58(1):1--37, 1983.

\bibitem[Rud83b]{rudolph:qp-alg}
Lee Rudolph.
\newblock Algebraic functions and closed braids.
\newblock {\em Topology}, 22(2):191--202, 1983.

\bibitem[Rud90]{rudolph:kauffman-bound}
L.~Rudolph.
\newblock A congruence between link polynomials.
\newblock {\em Math. Proc. Cambridge Philos. Soc.}, 107(2):319--327, 1990.

\bibitem[Rud93]{rudolph:qp-obstruction}
L.~Rudolph.
\newblock Quasipositivity as an obstruction to sliceness.
\newblock {\em Bull. Amer. Math. Soc. (N.S.)}, 29(1):51--59, 1993.

\bibitem[ST89]{st:genus}
Martin Scharlemann and Abigail Thompson.
\newblock Link genus and the {C}onway moves.
\newblock {\em Comment. Math. Helv.}, 64(4):527--535, 1989.

\end{thebibliography}

\end{document}